\documentclass[reqno, 12pt]{amsart}

\usepackage[utf8]{inputenc}
\usepackage[T1]{fontenc}
\usepackage{lmodern}
\usepackage{mathrsfs} 

\usepackage{amsmath}
\usepackage{amsthm}
\usepackage{amssymb}
\usepackage{mathscinet} 
\usepackage[all,cmtip]{xy} 
\usepackage{stmaryrd} 

\usepackage{enumerate} 
\usepackage{graphicx}
\usepackage{hyperref}
\usepackage[a4paper, hmargin=3cm, vmargin=3cm]{geometry}

\newcommand\mathens[1]{\mathbb{#1}} 

\newcommand{\ud}{\mathrm{d}}

\newcommand{\F}{\mathens{F}}
\newcommand{\N}{\mathens{N}}
\newcommand{\Z}{\mathens{Z}}

\newcommand{\R}{\mathens{R}}
\newcommand{\RP}{\mathens{R}\mathrm{P}}
\newcommand{\C}{\mathens{C}}
\newcommand{\CP}{\mathens{C}\mathrm{P}}

\newcommand\sphere[1]{\mathens{S}^{#1}}

\newcommand{\bock}{\mathcal{B}}

\newcommand{\la}{\left\langle}
\newcommand{\ra}{\right\rangle}

\newcommand{\ham}{\textup{Ham}}

\newcommand{\cont}{\textup{Cont}}
\newcommand{\contoc}{\textup{Cont}_{0}}

\newcommand{\id}{\textup{id}}

\DeclareMathOperator\ind{ind}

\DeclareMathOperator\cat{cat}
\DeclareMathOperator\cwgt{cwgt}

\newtheorem{thm}{Theorem}[section]
\newtheorem{lem}[thm]{Lemma}
\newtheorem{cor}[thm]{Corollary}
\newtheorem{prop}[thm]{Proposition}

\newtheorem{prop-def}[thm]{Definition-proposition}

\theoremstyle{definition}

\theoremstyle{remark}
\newtheorem{rem}[thm]{Remark}

\begin{document}

\title{On the minimal number of translated points
in contact lens spaces}

\author[S. Allais]{Simon Allais}
\address{Simon Allais,
\'Ecole Normale Sup\'erieure de Lyon,
UMPA\newline\indent  46 all\'ee d'Italie,
69364 Lyon Cedex 07, France}
\email{simon.allais@ens-lyon.fr}
\urladdr{http://perso.ens-lyon.fr/simon.allais/}
\date{March 30, 2021}
\subjclass[2020]{53D10, 58E05, 57R17}
\keywords{Arnold conjecture,
generating functions, Lusternik-Schnirelmann theory, Reeb dynamics,
translated points of contactomorphisms}

\begin{abstract}
    In this article, we prove that
    every contactomorphism of any standard contact lens space of dimension
    $2n-1$ that is contact-isotopic to identity has at least
    $2n$ translated points.
    This sharp lower bound refines a result of
    Granja-Karshon-Pabiniak-Sandon and answers a conjecture
    of Sandon positively.
\end{abstract}
\maketitle

\section{Introduction}

Given a contact manifold $(V,\ker\alpha)$, a discriminant point of
a contactomorphism $\varphi\in\cont(V,\ker\alpha)$ is a point $p\in V$
such that $\varphi(p)=p$ and $(\varphi^*\alpha)_p = \alpha_p$
(it does not depend on the choice of contact form).
Generically, contactomorphisms do not possess any discriminant points.
A translated point of $\varphi$ with time-shift $\tau$ is a discriminant point 
of $\phi^\alpha_{-\tau} \circ\varphi$,
where $(\phi^\alpha_t)$ denotes the Reeb flow of $\alpha$ ;
it does depend on the choice of contact form $\alpha$.
Contactomorphisms of a closed manifold $V$
that are $C^1$-close to identity have at least as many translated points as some smooth
maps $V\to\R$ (see below).

Inspired by the Arnol'd conjecture, Sandon asked whether any contactomorphism
contact-isotopic to identity $\varphi\in\contoc(V)$ has at least as many
translated points as the minimal number of critical points a smooth map $V\to\R$
can have.
This question was positively answered by Sandon for contact spheres
$\sphere{2n-1}$ and real projective spaces $\RP^{2n-1}$ endowed with the
standard contact form
\begin{equation}\label{eq:stdContactForm}
    \alpha := \frac{1}{2\pi} \sum_{j=1}^n (x_j\ud y_j - y_j \ud x_j),
\end{equation}
(the normalization is made so that the Reeb flow 
$\phi^\alpha_t:z\mapsto e^{2i\pi t}z$ is $1$-periodic)
\cite{San13}.
In the case of hypertight contact manifold
(\emph{i.e.} such that all Reeb orbits are non-contractible for
some contact form supporting the contact structure),
the question was positively answered for generic contactomorphisms contact-isotopic
to identity by the work of Albers-Fuchs-Merry \cite{AFM15} completed by
Meiwes-Naef \cite{MN18}.
The case of the standard lens spaces was partially answered by
Granja-Karshon-Pabiniak-Sandon \cite{GKPS}.
A lens space $L^{2n-1}_k$ with fundamental group $\Z/k\Z$ is
the quotient of $\sphere{2n-1}$ under the free $\Z/k\Z$-action
generated by
\begin{equation}\label{eq:ZpAction}
    (z_1,\ldots,z_n) \mapsto \left(e^{2i\pi w_1/k}z_1,\ldots,e^{2i\pi
    w_n/k}z_n\right),
\end{equation}
where $w_1,\ldots, w_n\in \N^*$ are positive integers prime to $k$.
This definition does depend on the choice of integers $w_1,\ldots,w_n$
but not the results stated here so we will not write them explicitly.
The standard contact form (\ref{eq:stdContactForm}) is invariant under these
$\Z/k\Z$-actions, endowing the associated lens spaces with their standard
contact structure.
In this article, we give a positive answer to the conjecture
of Sandon in the case of standard contact lens spaces.

In order to state the general result of the paper properly,
let us fix a free $\Z/k\Z$-action (\ref{eq:ZpAction})
and let us denote by $\contoc^{\Z/k\Z}(\sphere{2n-1})$ the set
of $\Z/k\Z$-equivariant contactomorphisms of $\sphere{2n-1}$
that are isotopic to identity through $\Z/k\Z$-equivariant
contactomorphisms.
The set of time-shifts of a contactomorphism $\varphi\in\contoc(\sphere{2n-1})$
will refer to the set of time-shifts of its translated points.
Since the Reeb flow of (\ref{eq:stdContactForm}) is $1$-periodic,
the set of time-shifts of $\varphi$ is invariant under integral translations so it
can be seen as a subset of $\R/\Z$.

\begin{thm}\label{thm:main}
    Let us assume that $\varphi\in\contoc^{\Z/k\Z}(\sphere{2n-1})$ has finitely
    many translated points (with the standard contact form), then the number of
    time-shifts
    of $\varphi$ in $\R/\Z$ is at least $2n$.
\end{thm}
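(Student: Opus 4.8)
The plan is to reduce the counting of time-shifts to a Lusternik--Schnirelmann count of distinct critical values of a generating function, and then to extract the sharp constant $2n$ from the category weight of the lens space rather than from its cup-length.

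First I would attach to $\varphi$ a generating function quadratic at infinity. Since $\varphi$ is isotopic to $\id$ through $\Z/k\Z$-equivariant contactomorphisms, and the standard Reeb flow is $\Z/k\Z$-equivariant and $1$-periodic, a Chaperon--Théret--Sandon type discretization of an equivariant contact isotopy from $\id$ to $\varphi$ should produce a $\Z/k\Z$-invariant function $F\colon \sphere{2n-1}\times\R^N\to\R$, quadratic at infinity, together with a bijection between the critical points of $F$ and the translated points of $\varphi$ under which the critical value equals the corresponding time-shift. Because the Reeb flow is $1$-periodic, $F$ is only well defined up to the addition of integers, so its critical values, hence the time-shifts, are naturally read in $\R/\Z$. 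Passing to the free quotient, $F$ descends to a generating function on $L^{2n-1}_k\times\R^N$, and the problem becomes a $\cat$-type lower bound on the number of distinct critical values of a function on the lens space.

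Next I would run min-max over the sublevel sets of $F$. The crucial point is that the naive cup-length of $L^{2n-1}_k$ is only $n$, which would give a lower bound of $n+1$, far short of $2n$. To reach the sharp value I would instead use category weight: in $H^*(L^{2n-1}_k;\Z/k\Z)$, generated by a degree-one class $u$ and its Bockstein $v=\bock u$, the $2n$ monomials $1,u,v,uv,\ldots,v^{n-1},uv^{n-1}$ form a basis and carry strictly increasing category weights $0,1,2,\ldots,2n-1$ (recorded by $\cwgt$), the odd jumps being detected by $\bock$ rather than by a cup product. Subordinating these $2n$ classes to the min-max construction on the relative cohomology of pairs of sublevel sets of $F$ yields $2n$ min-max values
\begin{equation*}
    c_1\le c_2\le\cdots\le c_{2n},
\end{equation*}
each of which is a critical value of $F$, that is, a time-shift of $\varphi$. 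I would then invoke the category-weight refinement of the Lusternik--Schnirelmann dichotomy: if $c_i=c_{i+1}$ for some $i$, the jump in $\cwgt$ forces the critical set of $F$ at this common level to be cohomologically nontrivial, hence infinite, and pushed back to $\sphere{2n-1}$ this produces infinitely many translated points of $\varphi$. Under the hypothesis of finitely many translated points, all inequalities are therefore strict, so $c_1<c_2<\cdots<c_{2n}$ are $2n$ pairwise distinct time-shifts in $\R/\Z$.

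The hard part will be the second step: establishing that the category weight of $L^{2n-1}_k$ really attains the full value $2n$ and transporting this to the generating-function setting. The odd-degree jumps are invisible to cup products and must be extracted from the Bockstein $\bock$, so one must check that the category-weight filtration interacts correctly with the quadratic-at-infinity normalization and with the $\Z/k\Z$-equivariant (equivalently, twisted $\R/\Z$-valued) structure, and that the degenerate-level analysis genuinely yields an \emph{infinite}, and not merely non-isolated, set of translated points.
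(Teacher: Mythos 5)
Your topological engine (Fadell--Husseini category weight, with the Bockstein $\bock$ supplying the even jumps that cup-length alone cannot see) is exactly the one the paper uses, and your min-max phrasing of the Lusternik--Schnirelmann dichotomy is a legitimate equivalent of the relative-category statement. But there are two genuine gaps. The first, and the one that actually breaks the argument as written, is the passage from $2n$ distinct critical values to $2n$ distinct time-shifts in $\R/\Z$. Your $F$ is a real-valued function, so your min-max values $c_1<\cdots<c_{2n}$ are distinct \emph{reals}; since each translated point has a $\Z$-orbit of time-shifts and your $F$ necessarily selects representatives, nothing prevents $c_j-c_i\in\Z$ for $i\neq j$, in which case they project to the same point of $\R/\Z$ and you could end up with as few as $2n-1$ time-shifts. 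The paper confronts this head-on: it works with a one-parameter family $\widehat F_t$, replaces $F$ by the function $\action$ on the level set $M=\{\widehat F_t(z)=0\}$ whose critical values \emph{are} the time-shifts as real numbers, runs the estimate over \emph{two} Reeb periods $[0,2]$ (so the number of critical values there is $2k$), and proves $\cat(\{\action|_C\le 2\},\{\action|_C\le 0\})\ge 4n-1$; the conclusion $2k\ge 4n-1$ then forces $k\ge 2n$ by integrality. The loss of one unit (getting $4n-1$ rather than $4n$ in the odd-index case) is precisely why a single period does not suffice, and your proposal has no mechanism to absorb it.

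The second gap is that you treat the existence of $2n$ classes subordinated to the min-max construction as routine, when it is the main analytic input. You need to know that a full block of $2n$ consecutive monomials $\beta^r,\alpha\beta^r,\dots$ of the ambient lens space survives in the relative cohomology $H^*(\{\widehat F_{t+1}\le 0\},\{\widehat F_t\le 0\})$; this is the index-variation computation ($\ind\{\widehat F_2\le 0\}-\ind\{\widehat F_0\le 0\}=4n$, Lemma~3.1 of the paper), which rests on equivariant normal forms for the quadratic generating functions of the rotations $e^{-2i\pi t}$ and is where the ambient space is forced to be a \emph{large} lens space $L^{2N-1}_p$ rather than your product $L^{2n-1}_k\times\R^N$ --- for odd $k$ no fiberwise-quadratic-at-infinity invariant model is available, which is why the paper (following Givental and Th\'eret) uses $2$-homogeneous generating functions restricted to a sphere. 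Finally, a smaller but necessary point: your computation of $H^*(L^{2n-1}_k;\Z/k\Z)$ as $\F_p[\alpha,\beta]/(\alpha)(\beta^n)$ and the bound $\cwgt(\bock u)\ge 2$ both require $k$ prime; you must first reduce to this case by passing to a subgroup $\Z/p\Z\subset\Z/k\Z$, as the paper does at the start of its proof.
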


More precise information about the structure of the set of translated
points of $\varphi$ can be also recovered from the proof,
see Remark~\ref{rem:LS}.
Theorem~\ref{thm:main} implies that every contactomorphism
$\varphi\in\contoc(L^{2n-1}_k)$ has at least $2n$ translated points.
Indeed, one can lift $\varphi$ to $\widetilde{\varphi}\in\contoc^{\Z/k\Z}(\sphere{2n-1})$.
Translated points of $\widetilde{\varphi}$ come in families of $\Z/k\Z$-orbits
and translated points of a same orbit have a common time-shift $t\in\R/\Z$.
Since distinct $\Z/k\Z$-orbits of translated points of $\widetilde{\varphi}$
projects to distinct translated points of $\varphi$, the conclusion follows.

\begin{cor}
    Every contactomorphism of $L^{2n-1}_k$ contact-isotopic to identity
    has at least $2n$ translated points.
\end{cor}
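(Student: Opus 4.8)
The plan is to deduce the corollary from Theorem~\ref{thm:main} by lifting to the sphere and carefully tracking how translated points and their time-shifts behave under the covering projection $\pi\colon\sphere{2n-1}\to L^{2n-1}_k$ that presents the lens space as the quotient by the free $\Z/k\Z$-action (\ref{eq:ZpAction}). First I would dispose of the trivial case: if $\varphi\in\contoc(L^{2n-1}_k)$ has infinitely many translated points, then it has at least $2n$ of them and there is nothing to prove. So from now on I may assume that $\varphi$ has only finitely many translated points.

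Next I would construct an equivariant lift. Since $\varphi$ is contact-isotopic to identity, I would lift a defining contact isotopy $(\varphi_t)$ with $\varphi_0=\id$ and $\varphi_1=\varphi$ to a contact isotopy $(\widetilde{\varphi}_t)$ of $\sphere{2n-1}$ starting at $\id$; because $\pi$ is a $k$-fold covering and the standard contact form (\ref{eq:stdContactForm}) is $\Z/k\Z$-invariant, uniqueness of lifts forces each $\widetilde{\varphi}_t$ to commute with the $\Z/k\Z$-action, so the time-one map $\widetilde{\varphi}:=\widetilde{\varphi}_1$ lies in $\contoc^{\Z/k\Z}(\sphere{2n-1})$ and covers $\varphi$. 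The projection $\pi$ is a local contactomorphism intertwining the Reeb flow $(\phi^\alpha_t)$ of the sphere with the induced Reeb flow on $L^{2n-1}_k$; hence if $\tilde p$ is a translated point of $\widetilde{\varphi}$ with time-shift $\tau$, applying $\pi$ to the identity $\widetilde{\varphi}(\tilde p)=\phi^\alpha_\tau(\tilde p)$ together with the analogous condition on the contact form shows that $\pi(\tilde p)$ is a translated point of $\varphi$ with the same time-shift. In particular every translated point of $\widetilde{\varphi}$ lies over a translated point of $\varphi$, so the finiteness assumption on $\varphi$ forces $\widetilde{\varphi}$ to have only finitely many translated points as well, each fibre of $\pi$ consisting of exactly $k$ points.

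I would then apply Theorem~\ref{thm:main} to $\widetilde{\varphi}$: its set of time-shifts in $\R/\Z$ has cardinality at least $2n$. Equivariance of $\widetilde{\varphi}$ and the fact that the $\Z/k\Z$-action commutes with $(\phi^\alpha_t)$ imply that the group permutes the translated points of $\widetilde{\varphi}$ while preserving their time-shifts, so these points split into $\Z/k\Z$-orbits along which the time-shift is constant. Two orbits carrying different time-shifts are distinct, whence there are at least $2n$ such orbits. Since each orbit is exactly a fibre of $\pi$ (the action being free and transitive on fibres), distinct orbits project to distinct points of $L^{2n-1}_k$, each of which is a translated point of $\varphi$ by the previous paragraph. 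This produces at least $2n$ translated points of $\varphi$.

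The genuinely delicate points are the equivariance of the lift, which I would extract from uniqueness of lifts of isotopies through a covering combined with the invariance of $\alpha$, and the bookkeeping that transfers a lower bound on the \emph{number of time-shifts} upstairs into a lower bound on the \emph{number of translated points} downstairs. I stress that one cannot bound the number of translated points of $\varphi$ below by its number of time-shifts directly: the induced Reeb flow on $L^{2n-1}_k$ admits short closed orbits, so a single point downstairs could a priori be a translated point for several time-shifts. Passing through the $\Z/k\Z$-orbits upstairs, where the Reeb flow is genuinely $1$-periodic and each point carries a single time-shift in $\R/\Z$, is precisely what circumvents this obstruction.
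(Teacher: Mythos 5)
Your proof is correct and follows essentially the same route as the paper's (terser) argument: lift $\varphi$ to an equivariant $\widetilde{\varphi}\in\contoc^{\Z/k\Z}(\sphere{2n-1})$, observe that translated points upstairs come in $\Z/k\Z$-orbits with a common time-shift and that distinct orbits project to distinct translated points downstairs, then invoke Theorem~\ref{thm:main}. The extra details you supply --- equivariance of the lift via uniqueness of lifts, the transfer of finiteness through the finite fibres, and the remark about short Reeb orbits downstairs --- are all accurate refinements of what the paper leaves implicit.
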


More precisely, Theorem~\ref{thm:main} implies that every 
contactomorphism $\varphi_1$ of $L^{2n-1}_k$ contact-isotopic to identity
through $(\varphi_t)$
has at least $2n$ translated points $\{x_j\}$ so that
the loops
\begin{equation*}
    t\mapsto
    \begin{cases}
        \varphi_{2t}(x_j), & t\in [0,1/2], \\
        \phi^\alpha_{(1-2t)\tau_j} \circ \varphi_1(x_j), & t\in [1/2,1],
    \end{cases}
\end{equation*}
are contractible for some choice of respective time-shifts $\{\tau_j\}$.
Moreover, if the set $\{ x_j \}$ is finite, the cardinal of the projection
of $\{ \tau_j \}$ in $\R/\Z$ is at least $2n$
(and more information about the structure of these sets can be recovered
from the proof, see Remark~\ref{rem:LS}).

This lower bound is optimal.
Indeed, let $\mathrm{gr}(\varphi)$ be the graph of a contactomorphism
$\varphi\in \contoc(V,\alpha)$
\begin{equation*}
    \mathrm{gr}(\varphi) := \{ (p,\varphi(p),g(p)) \ |\ p\in V \}
    \subset V\times V\times\R,
\end{equation*}
where $\varphi^*\alpha = e^g\alpha$.
The graph of $\varphi$ is a Legendrian for the
contact form $\alpha_2 - e^\theta \alpha_1$ where $\alpha_j$ is the pull-back
of $\alpha$ by the projection on the $j$-th factor and $\theta$ denotes the
coordinate of the $\R$ factor.
Discriminant points of $\varphi$ correspond to intersection of $\mathrm{gr}(\varphi)$
with $\mathrm{gr}(\id)=\Delta\times 0$ where $\Delta\subset V\times V$ is the diagonal.
If $\varphi$ is $C^1$-close to identity, then $\mathrm{gr}(\varphi)$ lies in a Weinstein
neighborhood $W\simeq J^1\Delta$ of $\Delta\times 0$.
Seen in $J^1\Delta$, the graph of $\varphi$ is the $1$-jet $j^1 f$ of a smooth map
$f:\Delta\to \R$ and translated points with small time-shift $t$ correspond to
critical points of $f$ with critical value $t$, as stated.
In the special case where all Reeb orbits of $(V,\alpha)$ are closed,
one can shrink the Weinstein neighborhood $W$ so that the Reeb orbits
of $\mathrm{gr}(\id)$ correspond \emph{only}
to Reeb orbits of $\Delta\times 0 \subset J^1\Delta$
(whereas if the Reeb orbit $\gamma$ of some point $(x,x,0)\in\mathrm{gr}(\id)$
is not closed, the Poincaré recurrence theorem implies that
$\gamma$ contains a sequence of $(x,y_k,0)$ with $y_k\to x$ and $y_k\neq x$ ;
so that every Weinstein neighborhood must contain some point in the
same Reeb orbit as $(x,x,0)$ but in a different Reeb orbit of the neighborhood
$W\simeq J^1\Delta$).
Therefore, when all Reeb orbits of $(V,\alpha)$ are closed, there exists
contactomorphisms contact-isotopic to identity with exactly as many translated
points as the minimal number of critical point a map $V\to\R$ can have
(this discussion was already present in the work of Sandon \cite{San13}).

Since the Reeb flow of $L^{2n-1}_k$ is periodic,
it is enough to find a map $L^{2n-1}_k\to\R$ with $2n$ critical points.
A classical way of doing so is to consider the quotient map of the
$\Z/k\Z$-invariant map of $\sphere{2n-1}$
\begin{equation*}
    (z_1,\ldots,z_n) \mapsto |z_1|^2 + 2|z_2|^2 + \cdots + n|z_n|^2.
\end{equation*}
This is a Morse-Bott map $L^{2n-1}_k\to\R$ whose critical set is the disjoint
union of $n$ circles.
By properly adding a Morse map of the circle with $2$ critical points
in the Morse-Bott neighborhood
of each critical circle, one obtains a Morse map with $2n$ critical points.

Since every $\Z/k\Z$-equivariant map of $\sphere{2n-1}$ is also equivariant
under a subgroup $\Z/p\Z\subset\Z/k\Z$
where $p$ is a prime dividing $k$, it is enough to prove Theorem~\ref{thm:main}
when $k$ is a prime.
The case when $k$ is even was already proven by Sandon \cite{San13}.
Granja-Karshon-Pabiniak-Sandon proved Theorem~\ref{thm:main} when translated
points of $\varphi$ are non-degenerate and proved that the number of
translated points is at least $n$ otherwise \cite[Corollary~1.3]{GKPS}.

In order to study translated points, we follow the idea of Sandon originally
used in the case of $\sphere{2n-1}$ and $\RP^{2n-1}$ in \cite{San13} and
extended by Granja-Karshon-Pabiniak-Sandon to lens spaces $L_k^{2n-1}$ in \cite{GKPS}.
Sandon discovered that the proofs of the Arnol'd conjecture in $\CP^{n-1}$
given by Givental \cite{Giv90} and Théret \cite{The98} can be extended
to study translated points on quotients of the contact sphere $\sphere{2n-1}$.
In this article we are deeply inspired by the point of view of Théret on
the Fortune-Weinstein theorem \cite{For85} that we
had already develop to study periodic points of Hamiltonian
diffeomorphisms in $\CP^{n-1}$ \cite{periodicCPd, OnHoferZehnderGF} whereas
the study of Granja-Karshon-Pabiniak-Sandon was mainly focused on the
properties of a quasimorphism inspired by Givental's
non-linear Maslov index on $\CP^{n-1}$.

In order to study the $\Z/k\Z$-equivariant
contactomorphism $\varphi\in\contoc^{\Z/k\Z}(\sphere{2n-1})$ of the sphere $\sphere{2n-1}\subset\C^n$,
we lift it to a Hamiltonian diffeomorphism $\Phi$ of 
the symplectization $\C^n\setminus 0$ (endowed with the standard symplectic
form $\ud x\wedge\ud y$)
\begin{equation}\label{eq:lift}
    \Phi(z) := \frac{\|z\|}{e^{\frac{1}{2}g\left(\frac{z}{\|z\|}\right)}}
    \varphi\left(\frac{z}{\|z\|}\right),\quad
    \forall z\in\C^n,
\end{equation}
where $\varphi^*\alpha = e^g\alpha$ and $\|\cdot\|$ denotes the usual
Euclidean norm.
By construction, $\Phi$ is $\R_+$-homogeneous and equivariant under the
$\Z/k\Z$-action extended by linearity from the action on $\sphere{2n-1}$ :
$\Phi$ is an $(\R_+\times\Z/k\Z)$-equivariant Hamiltonian diffeomorphism.
$\Z/k\Z$-orbits of translated points with time-shift $t$ then correspond to
$(\R_+\times\Z/k\Z)$-orbits of fixed points of $e^{-2i\pi t}\Phi$.
For such families of Hamiltonian diffeomorphisms $(e^{-2i\pi t}\Phi)$,
Givental and Théret developed a variational theory based on generating
functions (they restricted themselves to homogeneous Hamiltonians
invariant under $S^1$ or $\Z/2\Z$, the extension to $\Z/k\Z$
was considered by Granja-Karshon-Pabiniak-Sandon).
Theorem~\ref{thm:main} follows from a careful application of the
Lusternik-Schnirelmann theory to this variational setting.

\subsection*{Organization of the paper}
In Section~\ref{se:LS}, we recall results of the Lusternik-Schnirelmann
theory that are useful for us.
In Section~\ref{se:optimal}, we introduce the variational setting and
use it to prove Theorem~\ref{thm:main}.

\subsection*{Acknowledgment}
I am grateful to Yael Karshon and Margherita Sandon for their advices and support.
I also thank Daniel \'{A}lvarez-Gavela, Mohammed Abouzaid and the other organizers
of the \emph{Generating Function Day} of February 2021, where I was able to
discuss this subject with
Yael for the first time.
Finally, I would like to thank my advisor Marco Mazzucchelli for his
constant support and helpful advice.

\section{Lusternik-Schnirelmann theory on the relative case}
\label{se:LS}

\subsection{Lusternik-Schnirelmann theorem}

Let $X$ be an ANR topological space (metric is enough for us).
Given $A\subset X$, the integer $\cat_X A\in\N$ is the minimal
integer $k$ such that $A$ can be covered by $k$ open subsets that are
contractible in $X$.
The Lusternik-Schnirelmann category of $X$ is defined as
\begin{equation*}
    \cat X := \cat_X X.
\end{equation*}

Let us fix $Y\subset X$. Given a subset $A\subset X$ that contains $Y$,
the integer $\cat_X (A,Y) \in\N$ is the minimal integer $k$ such that
\begin{equation*}
    A \subset W\cup A_1 \cup A_2 \cup \cdots \cup A_k,
\end{equation*}
with $W\subset X$ open that contains $Y$ and retracts on $Y$ by deformation and
open subsets
$A_1,\ldots, A_k$
that are contractible in $X$.
The Lusternik-Schnirelmann category of $(X,Y)$ is defined as
\begin{equation*}
    \cat (X,Y) := \cat_X (X,Y).
\end{equation*}

Let $f:M\to\R$ be a $C^1$-map defined on a closed manifold $M$.
More generally, the following theorem is true for $f$ defined
on a Banach manifold $M$ and that satisfies the Palais-Smale
condition in $f^{-1}(a,b)$.

\begin{thm}[{See for instance \cite[Proposition~7.7]{CLOT}}]
    \label{thm:LS}
    Let $a\leq b$ be regular values of $f$.
    Let us assume that $\{ f\leq b\}$ is connected.
    If $f$ has finitely
    many critical points with value inside $(a,b)$,
    the number of critical values $N\in\N$ inside $(a,b)$ satisfies
    \begin{equation*}
        N \geq \cat (\{ f \leq b \}, \{ f\leq a\}).
    \end{equation*}
\end{thm}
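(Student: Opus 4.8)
The plan is to construct explicitly an open covering of $X := \{f \le b\}$ that witnesses the inequality $\cat(X, Y) \le N$, where $Y := \{f \le a\}$ and $N$ is the number of critical values of $f$ in $(a,b)$; since the theorem asserts $N \ge \cat(X,Y)$, this is exactly what is needed. Because $f$ has finitely many critical points in $(a,b)$, there are finitely many critical values $a < c_1 < \cdots < c_N < b$, and I would choose regular values $a = a_0 < a_1 < \cdots < a_N = b$ with $c_i \in (a_{i-1}, a_i)$, so that each slab $\{a_{i-1} \le f \le a_i\}$ contains the single critical value $c_i$. The whole argument runs the negative (pseudo-)gradient flow of $f$: on a closed manifold this flow exists for all time, and in the Banach setting the Palais--Smale condition on $f^{-1}(a,b)$ supplies the compactness needed for the deformation statements below.

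The two ingredients are a deformation lemma and a neighborhood argument. First, since $[a_{i-1}, a_i]$ contains only the critical value $c_i$, for a suitable small $\varepsilon$ the flow deformation retracts $\{f \le a_i\}$, inside $X$, onto $\{f \le a_{i-1}\} \cup N_i$, where $N_i$ is any preassigned open neighborhood of the critical set $K_i$ at level $c_i$. Second, because $X$ is an ANR, each point of the finite set $K_i$ admits arbitrarily small neighborhoods that are contractible in $X$; taking $N_i$ to be a disjoint union of such small balls, one around each point of $K_i$, the inclusion $N_i \hookrightarrow X$ is null-homotopic, since each ball contracts to a point and the resulting finitely many points can be joined to a common basepoint by paths precisely because $\{f \le b\}$ is assumed connected. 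Thus each $N_i$ is contractible in $X$. Finally I would fix an open collar $W$ of $Y$ that deformation retracts onto $Y$, available since $a$ is regular. Assembling these by induction on $i$, and using that a deformation retraction inside $X$ transports any covering of the target by sets open and contractible in $X$ (respectively, by a neighborhood retracting onto $Y$) back to a covering of the source, one gets that $\{f \le a_i\}$ is covered by $W \cup A_1 \cup \cdots \cup A_i$ with each $A_j$ open and contractible in $X$. For $i = N$ this exhibits $X = W \cup A_1 \cup \cdots \cup A_N$, hence $\cat(X,Y) \le N$.

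The hard part will be the deformation lemma at the critical level $c_i$ when the finitely many critical points there may be degenerate: one must show that, for small $\varepsilon$, the flow pushes all of $\{f \le c_i + \varepsilon\}$ below $c_i - \varepsilon$ except on the prescribed neighborhood $N_i$, which relies on a uniform lower bound for $\|df\|$ away from $K_i$ on the slab. This is exactly where the Palais--Smale condition (or compactness on a closed manifold) enters, and where a pseudo-gradient vector field must replace the gradient since $f$ is only $C^1$. The second point to check is the homotopy-invariance used in the inductive step: if $\rho_t$ is a deformation of $X$ with $\rho_1(\{f \le a_i\}) \subset \{f \le a_{i-1}\} \cup N_i$, then pulling a good cover of the target back along $\rho_1$ yields, by openness of preimages, a good cover of $\{f \le a_i\}$, because both ``contractible in $X$'' and ``retracts onto $Y$ in $X$'' are preserved under composition with a homotopy of $X$. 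Granting these two standard facts, the counting above is immediate and the rest is bookkeeping of regular values and collars, in agreement with the cited statement \cite[Proposition~7.7]{CLOT}.
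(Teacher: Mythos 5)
Your argument is correct and is essentially the standard proof of the relative Lusternik--Schnirelmann theorem: subdivide by regular values so each slab contains one critical level, use the first deformation lemma (pseudo-gradient flow plus Palais--Smale/compactness) to deform each sublevel onto the previous one union a small neighborhood of the finite critical set, and use connectedness of $\{f\le b\}$ to make that neighborhood contractible in $X$, yielding $\cat(X,Y)\le N$. The paper gives no proof of its own for this statement and simply cites \cite[Proposition~7.7]{CLOT}, which proceeds exactly along these lines, so there is nothing to contrast.
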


\begin{rem}\label{rem:LS}
Actually, the Lusternik-Schnirelmann theorem tells us a bit more
about the structure of the critical set $K\subset f^{-1}(a,b)$ of $f$.
For instance, if $f$ has less critical values than the category
of its relative sublevel set, non only is the set of critical points $K$
infinite, but any neighborhood $V\subset\{ f\leq b\}$ of $K$ has a category
$\cat_X V \geq 1$ relative to $X:=\{ f\leq b\}$
(in particular, $V$ possess a non-contractible connected component).
The proof of Theorem~\ref{thm:main} is an application of this theorem
and therefore tells us a bit more than stated too.
In particular, if $\varphi\in\contoc^{\Z/k\Z}(\sphere{2n-1})$ has less time-shifts
in $\R/\Z$ than $2n$, for any $\Z/k\Z$-invariant neighborhood $V\subset \sphere{2n-1}$,
the inclusion $V/(\Z/k\Z) \hookrightarrow L_k^{2n-1}$ is not null-homotopic.
\end{rem}

\subsection{Fadell-Husseini's category weight}

In the study of critical points of the lens spaces,
the usual bound $\cat X \geq CL(X) + 1$, where $CL(X)\in\N$ denotes
the cup-length of $X$, is not optimal.
Given $u\in H^*(X)$ and $A\subset X$, the class $u|_A \in H^*(A)$
denotes the image of $u$ under the restriction morphism induced by
the inclusion $A\subset X$.
Given $u\in H^*(X,Y)$ and $(A,B)\subset (X,Y)$, we denote
by $u|_{(A,B)} \in H^*(A,B)$ the restricted class.

Given a non-zero class $u\in H^*(X)$, Fadell-Husseini 
\cite{FH92} defined the category
weight $\cwgt(u)$ of $u$
as the maximal integer $k\in\N$ such that $u|_A = 0$ for all $A\subset X$
satisfying $\cat_X A \leq k$.
It can be naturally extended to non-zero relative classes $u\in H^*(X,Y)$
as the maximal integer $k\in\N$ such that $u|_{(A,Y)}=0$ for all
$A\supset Y$ in $X$ satisfying $\cat_X (A,Y) \leq k$.
For instance, if $X$ is connected, the class $1\in H^0(X)$
has category weight $\cwgt(1)=0$ whereas $\cwgt(u)\geq 1$ for
a non-zero class $u\in H^d(X)$ with degree $d\geq 1$.
In the case of a non-zero relative class $u\in H^*(X,Y)$,
the category weight may vanish.

By the usual properties of the cup-product,
for non-zero classes $u$ and $v$ in $H^*(X)$
or $H^*(X,Y)$, if $u\smile v \neq 0$, then
\begin{equation*}
    \cwgt(u\smile v) \geq \cwgt(u) + \cwgt(v).
\end{equation*}
Indeed, if $u|_A = 0$ and $v|_B = 0$ then $(u\smile v)|_{A\cup B} = 0$.

By definition of the category weight, $\cat (X,Y) \geq 1 + \cwgt (u)$
for all non-zero class $u\in H^*(X,Y)$.
Therefore, if $u_1\smile \cdots \smile u_k$ is a non-zero class of
$H^*(X,Y)$, then
\begin{equation*}
    \cat (X,Y) \geq 1 + \cwgt(u_1) + \cdots + \cwgt(u_k).
\end{equation*}
In principle, it generalises the cup-length estimate.
The following theorem shows that it is indeed a better estimate in general.
Let us denote the mod $p$ Bockstein morphism of a topological space $X$
by $\bock:H^*(X;\F_p) \to H^{*+1}(X;\F_p)$.

\begin{thm}[{\cite[Theorem~1.2]{FH92}}]
    \label{thm:bockstein}
    Let $X$ be an ANR topological space and $p$ be a prime number.
    If $u\in H^1(X;\F_p)$ and $\bock u \neq 0$, then
    $\cwgt(\bock u) \geq 2$.
\end{thm}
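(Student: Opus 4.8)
The plan is to verify directly the defining inequality for category weight: I will show that $(\bock u)|_A = 0$ for every subset $A \subset X$ with $\cat_X A \leq 2$, which is exactly the assertion $\cwgt(\bock u) \geq 2$. Fix such an $A$. Intersecting the covering open sets with $A$, I may write $A = A_1 \cup A_2$ where each $A_i$ is open in $A$ and the inclusion $A_i \hookrightarrow X$ is null-homotopic. Since a null-homotopic inclusion induces the zero map on cohomology in positive degree, the restrictions $u|_{A_1}$ and $u|_{A_2}$ vanish in $H^1(A_i;\F_p)$. (The case $\cat_X A \leq 1$ is immediate: then $A$ itself is contractible in $X$, so $u|_A = 0$ and hence $(\bock u)|_A = \bock(u|_A) = 0$; only $\cat_X A = 2$ needs an argument.)

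First I would feed this into the Mayer--Vietoris sequence of the open cover $\{A_1, A_2\}$ of $A$ with $\F_p$ coefficients:
\begin{equation*}
    H^0(A_1 \cap A_2; \F_p) \xrightarrow{\ \delta\ } H^1(A; \F_p) \longrightarrow H^1(A_1; \F_p) \oplus H^1(A_2; \F_p).
\end{equation*}
Because $u|_A$ maps to $(u|_{A_1}, u|_{A_2}) = 0$ on the right, exactness produces a class $c \in H^0(A_1 \cap A_2; \F_p)$ with $u|_A = \delta c$. The essential gain is that $c$ now lives in degree $0$.

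The key step is to commute the Bockstein past $\delta$. Both $\bock$ and the Mayer--Vietoris connecting homomorphism $\delta$ arise as connecting homomorphisms of short exact sequences of free cochain complexes --- $\delta$ from the cover, and $\bock$ from the coefficient sequence $0 \to \F_p \to \Z/p^2\Z \to \F_p \to 0$ --- so they assemble into a double complex and therefore commute up to sign. Consequently
\begin{equation*}
    (\bock u)|_A = \bock(\delta c) = \pm\, \delta(\bock c).
\end{equation*}
Finally $\bock c = 0$, because the Bockstein vanishes identically on $H^0$: the reduction $H^0(-;\Z/p^2\Z) \to H^0(-;\F_p)$ is onto (locally constant functions lift), so every degree-$0$ class admits a $\Z/p^2\Z$-lift and hence has trivial Bockstein. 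Therefore $(\bock u)|_A = 0$, as wanted.

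I expect the delicate point to be the compatibility of the two connecting homomorphisms: care is needed to set up a genuine short exact sequence of \emph{free} cochain complexes computing $H^*(A)$ from the cover (for instance via small singular chains subordinate to $\{A_1,A_2\}$), so that the coefficient sequence induces a map of Mayer--Vietoris sequences and the standard double-complex sign-commutation applies. The argument is uniform in $p$, which is the point of this formulation: for $p = 2$ one could instead shortcut via $\bock u = u \smile u$ together with $\cwgt(u \smile u) \geq \cwgt(u) + \cwgt(u) \geq 2$, but this collapses for odd $p$, where $u \smile u = 0$, and it is precisely to circumvent that failure that the degree-$0$ Mayer--Vietoris argument is used.
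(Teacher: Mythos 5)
Your proof is correct, but note that the paper itself offers no proof of Theorem~\ref{thm:bockstein}: the statement is imported verbatim from Fadell--Husseini [FH92, Theorem~1.2] and used as a black box, so the comparison is with the original source rather than with anything in this text. Your route --- covering $A$ by two subsets contractible in $X$, using the Mayer--Vietoris connecting homomorphism to write $u|_A=\delta c$ with $c\in H^0(A_1\cap A_2;\F_p)$, anticommuting $\bock$ with $\delta$, and killing $\bock c$ because $H^0(-;\Z/p^2\Z)\to H^0(-;\F_p)$ is onto --- is sound, and you correctly isolate the one technical point: the Mayer--Vietoris sequence must be modelled by a short exact sequence of cochain complexes of the form $\mathrm{Hom}(F,M)$ with $F$ a free chain complex (small singular chains do this), so that the coefficient sequence stays exact degreewise and the standard $3\times 3$ anticommutation of connecting homomorphisms applies; one should also record that the comparison quasi-isomorphism with $C^*(A;M)$ is natural in $M$, hence compatible with $\bock$. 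The edge cases ($\cat_X A\le 1$, or $A_1\cap A_2=\emptyset$) are handled. This is more elementary than Fadell--Husseini's original argument, which passes through the classifying space of the $\Z/p\Z$-cover classified by $u$; your degree-lowering through $H^0$ of the intersection achieves the same end with only basic homological algebra, and as a bonus does not use the ANR hypothesis for this particular step. Your closing remark on the $p=2$ shortcut via $\bock u=u\smile u$, and its failure for odd $p$, is also accurate.
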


As an application, one can compute the Lusternik-Schnirelmann category of
a lens space $L^{2n-1}_p$ of dimension $2n-1$ and fundamental group $\Z/p\Z$.
As a graded $\F_p$-algebra,
\begin{equation*}
    H^*(L_p^{2n-1};\F_p) \simeq \F_p[\alpha,\beta]/(\alpha)(\beta^n),
\end{equation*}
where $\deg \alpha =1$, $\deg \beta = 2$ and $\beta = \bock\alpha$.
The category weight $\cwgt(\alpha)\geq 1$ whereas $\cwgt(\beta) \geq 2$
according to Theorem~\ref{thm:bockstein}. Therefore
\begin{equation*}
    \cat (L_p^{2n-1}) \geq 1 + \cwgt(\alpha\smile\beta^{n-1})
    \geq 1 + \cwgt(\alpha) + (n-1)\cwgt(\beta) \geq 2n.
\end{equation*}
As there exist smooth maps $L^{2n-1}_p \to \R$ with exactly $2n$
critical points, Theorem~\ref{thm:LS} implies that $\cat(L_p^{2n-1})=2n$.
More precisely, Theorem~\ref{thm:LS} implies that every smooth map
$L^{2n-1}_p \to \R$ with finitely many critical points has at least
$2n$ critical values.
The computation of $\cat(L^{2n-1}_p)$ is initially due to Krasnosel'ski\u{\i}
\cite{Kra55}.

\section{The optimal bound on translated points in lens spaces}
\label{se:optimal}

In Section~\ref{se:variational}, we recall the variational principle for
translated points.
As we said in the introduction, it goes back to Givental \cite{Giv90} and
Théret \cite{The98} to study Hamiltonian dynamics of $\CP^{n-1}$
and was then extended by Sandon \cite{San13} and
Granja-Karshon-Pabiniak-Sandon \cite{GKPS} to treat translated points
of quotients of $\sphere{2n-1}$.
We do not strictly follow the lines of \cite{GKPS} but rather
the point of view developed in \cite{periodicCPd,OnHoferZehnderGF},
inspired by ideas of Théret.

\subsection{A variational principle for translated points}
\label{se:variational}

\newcommand\action{\mathcal{T}}

Let $p$ be a prime number greater than $2$
and let us fix $\varphi\in\contoc^{\Z/p\Z}(\sphere{2n-1})$.
Let $\Phi\in\ham(\C^n\setminus 0)$
be the $(\R_+\times \Z/p\Z)$-equivariant lift
given by (\ref{eq:lift}).
Let us build a $C^1$-family of generating functions
$F_t : (\C^{2n})^k \to \R$, $t\in (-1,3)$, associated with
$e^{-2i\pi t}\Phi$ that is 2-homogeneous and invariant
under the diagonal action of $\Z/p\Z$ on $(\C^{2n})^k$ induced
by the action on $\C^{2n}$.
Let $\sigma_1,\ldots,\sigma_m$ be $(\R_+\times \Z/p\Z)$-equivariant
Hamiltonian diffeomorphisms of $\C^n\setminus 0$
whose restriction to $\sphere{2n-1}$ are $C^1$-close to the identity
such that
$
    \Phi = \sigma_m \circ \cdots \circ \sigma_2 \circ \sigma_1,
$
with $m$ even.
Since the $\sigma_j$'s are $C^1$-close to identity among
$(\R_+\times \Z/p\Z)$-equivariant maps, there exist
smooth maps $f_j : \C^n\setminus 0 \to \R$ such that
\begin{equation*}\label{eq:egf}
    \forall z_j\in\C^n, \exists ! w_j\in\C^n,\quad
    w_j=\frac{z_j+\sigma_j(z_j)}{2} \quad \text{ and } \quad
    \nabla f_j (w_j) = i(z_j-\sigma_j(z_j)),
\end{equation*}
(the maps $G_j:=\nabla f_j$ can be found by the implicit function
theorem and these $G_j$ are gradients because their graphs are Lagrangians).
By equivariance of the $\sigma_j$'s, one can choose the $f_j$'s to be
2-homogeneous and $\Z/p\Z$-invariant.
These maps extend to $C^{1,1}$-maps of $\C^n$ by imposing $f_j(0)=0$.
In the special case of the small equivariant Hamiltonian diffeomorphism
$\delta_t:z\mapsto e^{-2i\pi t}z$, the associated map $q_t:\C^n\to\R$
is
\begin{equation*}
    q_t(w) := -\tan (\pi t)\| w\|^2, \quad
    \forall w\in\C^n,
\end{equation*}
well defined for $t\in (-1/2,1/2)$
($\|\cdot\|$ denotes the usual Euclidean norm of $\C^n\simeq\R^{2n}$).
We can now define, for $v_1,\ldots, v_{m+7}\in\C^n$ and $t\in (-1,3)$,
\begin{equation}\label{eq:gf}
    F_t (v_1,\dotsc,v_{m+7}) := \sum_{j=1}^m
        f_j\left(\frac{v_j + v_{j+1}}{2}\right) +
        \sum_{j=m+1}^{m+7} q_{t/7}\left(\frac{v_j + v_{j+1}}{2}\right)
        + \sum_{j=1}^{m+7}
    \frac{1}{2}\la v_j,iv_{j+1}\ra.
\end{equation}
Therefore, the map $(t,\mathbf{v})\mapsto F_t(\mathbf{v})$ is a $C^1$-map
$(-1,3)\times (\C^n)^{m+7}\to\R$ that is
2-homogeneous and $(\R_+\times \Z/p\Z)$-equivariant.
\begin{prop}[{\cite[Proposition~5.1]{periodicCPd}}]
    Critical points $(v_1,\ldots,v_{m+7})$ of $F_t$ are in one-to-one
    correspondence with fixed points of $e^{-2i\pi t}\Phi$ through
    the map 
    \begin{equation*}(v_1,\ldots,v_{m+7})\mapsto v_1.\end{equation*}
\end{prop}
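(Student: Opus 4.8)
The plan is to establish the bijection between critical points of $F_t$ and fixed points of $e^{-2i\pi t}\Phi$ by unwinding the critical-point equations of the quadratic-plus-generating-function $F_t$ and recognizing that they encode the composition of the individual diffeomorphisms $\sigma_j$ (and the rotations $\delta_{t/7}$). The key structural fact is that each summand $f_j((v_j+v_{j+1})/2)$ is a generating function for the graph of $\sigma_j$ in the sense of the defining relation for the $f_j$, and the bilinear terms $\frac12\la v_j, iv_{j+1}\ra$ glue consecutive pieces together so that criticality forces $v_{j+1} = \sigma_j(v_j)$ (up to the appropriate normalization of indices), with the final chain closing up precisely when $v_1$ is a fixed point of the full composition.

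First I would write down the system $\partial F_t/\partial v_\ell = 0$ for each $\ell \in \{1,\ldots,m+7\}$. Each $v_\ell$ appears in exactly two of the $f_j$ (or $q_{t/7}$) terms — namely the $(\ell-1)$-th and $\ell$-th summands, via the midpoints $(v_{\ell-1}+v_\ell)/2$ and $(v_\ell+v_{\ell+1})/2$ — and in two bilinear terms $\frac12\la v_{\ell-1}, iv_\ell\ra$ and $\frac12\la v_\ell, iv_{\ell+1}\ra$. Differentiating and using $\nabla(\frac12\la v_\ell, iv_{\ell+1}\ra) = \tfrac12 iv_{\ell+1}$ (together with the adjoint contribution $-\tfrac12 i v_{\ell-1}$, since $i$ is skew-adjoint for the real inner product), the equation $\partial_{v_\ell} F_t = 0$ becomes, after multiplying by $i$ and comparing with the defining relation $\nabla f_j(w_j) = i(z_j - \sigma_j(z_j))$, exactly the statement that setting $z_j = v_j$ yields $v_{j+1} = \sigma_j(v_j)$. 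I would treat the indices $m+1,\ldots,m+7$ identically, using $q_{t/7}$ in place of $f_j$, so that those steps contribute the rotation $\delta_{t/7}$ applied seven times, i.e. $\delta_t = e^{-2i\pi t}\,\mathrm{id}$.

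Chaining these relations gives $v_{m+8} = e^{-2i\pi t}\,\sigma_m\circ\cdots\circ\sigma_1(v_1) = e^{-2i\pi t}\Phi(v_1)$; the cyclic closure condition coming from the first/last bilinear terms identifies $v_{m+8}$ with $v_1$, so criticality is equivalent to $v_1 = e^{-2i\pi t}\Phi(v_1)$, i.e. $v_1$ is a fixed point of $e^{-2i\pi t}\Phi$. Conversely, given such a fixed point $v_1$, the relations define $v_2,\ldots,v_{m+7}$ uniquely and one checks directly that the resulting tuple is critical, establishing the inverse map. \textbf{The main obstacle} I anticipate is purely bookkeeping: being careful with the cyclic index convention (what $v_{m+8}$ means and how the closing bilinear pairing enforces $v_{m+8}=v_1$), with the skew-adjointness of multiplication by $i$ that produces the correct signs in each variational equation, and with the splitting of the rotation $\delta_t$ into seven equal factors $\delta_{t/7}$ so that $q_{t/7}$ stays inside its domain of definition $(-1/2,1/2)$ for all $t\in(-1,3)$. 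None of these is conceptually hard, but the one-to-one claim hinges on verifying that the implicitly defined $G_j = \nabla f_j$ really do recover $\sigma_j$ from the midpoint relation, which is where the generating-function identity must be invoked precisely rather than schematically. Since this proposition is quoted from \cite[Proposition~5.1]{periodicCPd}, I would ultimately reduce the verification to that reference after exhibiting the correspondence explicitly.
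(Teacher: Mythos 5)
Your overall strategy is the right one (and is essentially the one behind \cite[Proposition~5.1]{periodicCPd}, which the paper simply cites): unwind the equations $\partial_{v_\ell}F_t=0$, use the defining relation of the $f_j$ to recover $v_{\ell+1}=\sigma_\ell(v_\ell)$, and chain around the cycle. But there is a genuine gap in the central step. You assert that the single equation $\partial_{v_\ell}F_t=0$ ``becomes \dots exactly the statement that setting $z_j=v_j$ yields $v_{j+1}=\sigma_j(v_j)$.'' It does not: each $v_\ell$ sits in \emph{two} generating-function summands, and if one sets $w_\ell:=\tfrac{1}{2}(v_\ell+v_{\ell+1})$ and $u_\ell:=\nabla f_\ell(w_\ell)-i(v_\ell-v_{\ell+1})$ (with $f_\ell$ replaced by $q_{t/7}$ for $\ell>m$), a direct computation using the skew-adjointness of $i$ gives $\partial_{v_\ell}F_t=\tfrac{1}{2}(u_{\ell-1}+u_\ell)$. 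So criticality is equivalent to the coupled cyclic system $u_{\ell-1}+u_\ell=0$ for all $\ell$, not to $u_\ell=0$ term by term. Propagating around the cycle gives $u_1=(-1)^{m+7}u_1$, and it is only because $m+7$ is \emph{odd} (this is exactly why the construction takes $m$ even and inserts seven copies of $q_{t/7}$) that one can conclude $u_\ell\equiv 0$; only then does the unique solvability in the definition of $f_\ell$ force $v_\ell=z_\ell$ and $v_{\ell+1}=\sigma_\ell(v_\ell)$. Your argument never invokes this parity, and without it the correspondence can fail (an even cycle admits the alternating solutions of the linear system, hence potentially spurious critical points).

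Once that is repaired, the rest of what you write goes through: chaining gives $v_1=\delta_{t/7}^{\,7}\circ\sigma_m\circ\cdots\circ\sigma_1(v_1)=e^{-2i\pi t}\Phi(v_1)$, the converse direction builds the unique critical chain over a fixed point, and your remarks about the domain of $q_{t/7}$ (indeed $t/7\in(-1/7,3/7)\subset(-1/2,1/2)$ for $t\in(-1,3)$) and about recovering $\sigma_\ell$ from the midpoint relation are correct. I would make the derivation of $\partial_{v_\ell}F_t=\tfrac{1}{2}(u_{\ell-1}+u_\ell)$ and the odd-length argument explicit rather than deferring them to the reference, since they are the actual content of the proof.
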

Since $F_t$ is 2-homogeneous and $\Z/p\Z$-invariant, an
$(\R_+\times\Z/p\Z)$-orbit of fixed points of $e^{-2i\pi t}\Phi$ corresponds to
an $(\R_+\times\Z/p\Z)$-orbit of critical points of $F_t$ with critical value
$0$.  Let $L^{2N-1}_p$ be the lens space of dimension $2N-1$ with $N:=n(m+7)$
obtained by quotient of $\sphere{2N-1}$ under the free diagonal action of
$\Z/p\Z$.  Let $\widehat{F}_t$ be the $C^1$-map $L^{2N-1}_p \to \R$ induced by
the $\Z/p\Z$-invariant map $F_t|_{\sphere{2N-1}}$.  Now, an
$(\R_+\times\Z/p\Z)$-orbit of fixed points of $e^{-2i\pi t}\Phi$ corresponds to
a critical point of $\widehat{F}_t$ \emph{with critical value $0$} and this
correspondence is a bijection.

Going back to our initial problem, $\Z/p\Z$-orbits of translated points of
$\varphi$ with
time-shift $t\in (-1,3)$ are in one-to-one correspondence with
$(\R_+\times\Z/p\Z)$-orbits of fixed points of $e^{-2i\pi t}\Phi$.  Therefore,
the $\Z/p\Z$-orbits of 
translated points of $\varphi$ with time-shift $t\in (-1,3)$ are in
one-to-one correspondence with critical points of $\widehat{F}_t$ with critical
value $0$.

According to \cite[Lemma~5.6]{periodicCPd}, the $C^1$-map $(t,z)\mapsto \widehat{F}_t(z)$
is a submersion.
Let $M \subset (-1,3)\times L^{2N-1}_p$ be the $C^1$-submanifold
\begin{equation*}
    M := \left\{ (t,z) \ |\ \widehat{F}_t(z) = 0 \right\}
\end{equation*}
and $\action : M\to (-1,3)$ be the $C^1$-map induced by the projection on
the first factor.
The $\Z/p\Z$-orbits of translated points of $\varphi$ with time-shift $t\in (-1,3)$
are in one-to-one correspondence with critical points of $\action$ with
critical value $t$.
By definition (\ref{eq:gf}), $\widehat{F}_t$ is decreasing with $t$
so $\{ \widehat{F}_t \leq 0 \}$ is increasing.
We recall that we have an isomorphism of graded $H^*(L^{2N-1}_p)$-algebra
\begin{equation}
    \label{eq:isoAction}
    H^*(\{ \action \leq b \},\{ \action \leq a\})
    \simeq
    H^*\left(\left\{ \widehat{F}_b \leq 0 \right\}, \left\{ \widehat{F}_a \leq
    0\right\}\right),
\end{equation}
for $a\leq b$, where $H^*(L^{2N-1}_p)\simeq H^*((-1,3)\times L^{2N-1}_p)$ acts by
cup-product. This isomorphism being induced by a continuous map, it also
commutes with the Bockstein morphism $\bock$.
See for instance \cite[Section~5.4]{periodicCPd}.

Given $A\subset L_p^{2N-1}$, the cohomological index of $A$ is defined
as the rank of the induced restriction morphism in cohomology
\begin{equation*}
    \ind A := \mathrm{rank} \left(H^*(L_p^{2N-1};\F_p) \to H^*(A;\F_p)\right)
    \in \{ 0,1,\ldots, 2N\}.
\end{equation*}
Equivalently, $\ind A$ is the minimal $k\in\N$ such that
the morphism $H^k(L_p^{2N-1};\F_p)\to H^k(A;\F_p)$ is trivial.
Our definition slightly differs from \cite{GKPS} since we are using
singular cohomology instead of \v{C}ech cohomology:
in general it could be less than their index but it coincides on open sets.
The following lemma is proven in \cite[\S Contact Arnold conjecture]{GKPS}.
We explain how to adapt the proof to our setting

\begin{lem}\label{lem:variationIndex}
    Let us assume that $\varphi$ does not have discriminant points,
    then
    \begin{equation*}
        \ind \left\{ \widehat{F}_2 \leq 0 \right\} - 
        \ind \left\{ \widehat{F}_0 \leq 0 \right\} = 4n.
    \end{equation*}
\end{lem}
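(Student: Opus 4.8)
The plan is to exploit the $1$-periodicity of the family $(e^{-2i\pi t}\Phi)_t$ together with the fact that inserting one full turn of the Reeb rotation raises the cohomological index of the level-$0$ sublevel set by exactly $2n$. First I would check that $0$ is a regular value of $\widehat{F}_t$ for $t\in\{0,1,2\}$. Since $e^{-2i\pi t}=1$ for every integer $t$, the maps $e^{-2i\pi t}\Phi$ for $t=0,1,2$ all coincide with $\Phi$, and a point of $\sphere{2n-1}$ fixed by $\Phi$ is exactly a discriminant point of $\varphi$. As $\varphi$ has none, $\Phi$ has no fixed point on the sphere, so $e^{-2i\pi t}\Phi$ has no fixed point on the sphere for $t=0,1,2$; by the correspondence recalled above this means $\widehat{F}_t$ has no critical point with critical value $0$ for these three values of $t$. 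Hence the sublevel sets $\{\widehat{F}_t\leq 0\}$ are, for $t\in\{0,1,2\}$, compact with regular boundary, and the indices $\ind\{\widehat{F}_t\leq 0\}$ are well defined and locally constant in $t$ near these values.

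Next I would telescope the difference as $\ind\{\widehat{F}_2\leq 0\}-\ind\{\widehat{F}_0\leq 0\}=\bigl(\ind\{\widehat{F}_2\leq 0\}-\ind\{\widehat{F}_1\leq 0\}\bigr)+\bigl(\ind\{\widehat{F}_1\leq 0\}-\ind\{\widehat{F}_0\leq 0\}\bigr)$, reducing the claim to the one-period shift $\ind\{\widehat{F}_{t+1}\leq 0\}-\ind\{\widehat{F}_t\leq 0\}=2n$ for $t\in\{0,1\}$. The key point is that $e^{-2i\pi(t+1)}\Phi$ and $e^{-2i\pi t}\Phi$ are the \emph{same} Hamiltonian diffeomorphism, so $\widehat{F}_{t+1}$ and $\widehat{F}_t$ are two generating functions of one and the same map differing only by the insertion of one full turn of the loop $s\mapsto e^{-2i\pi s}$ acting on $\C^n$. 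By the shift property of these $2$-homogeneous equivariant generating functions (in the spirit of Théret's treatment of the Fortune--Weinstein theorem, as developed in \cite{periodicCPd,OnHoferZehnderGF}), such a full turn raises the cohomological index of the level-$0$ sublevel set by the Maslov index of the loop, which for a complete rotation of $\C^n$ equals $2n$. One sees the value $2n$ concretely by reducing to the quadratic generating function of a linear rotation $\mathrm{diag}(e^{2i\pi\theta_j})$: as $t$ runs over a period each phase $\theta_j-t$ crosses exactly one integer, each crossing being a complex (hence two-real-dimensional) degeneracy, and since $\{\widehat{F}_t\leq 0\}$ is increasing in $t$ the corresponding Morse index jump is $+2$; summing over the $n$ coordinates gives $+2n$, whence the total $2n+2n=4n$.

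The main obstacle is this last step: turning the heuristic ``one full Reeb turn $=+2n$'' into a rigorous, $\Phi$-independent statement \emph{at level $0$} that is compatible with the $\Z/p\Z$-quotient and with the definition of $\ind$ via $\F_p$-cohomology. I expect to handle this exactly as in \cite{GKPS}, either by invoking the equivariant shift/stabilization lemma for homogeneous generating functions from \cite{periodicCPd,OnHoferZehnderGF} with the conventions fixed here, or by checking that the inclusion $\{\widehat{F}_t\leq 0\}\hookrightarrow\{\widehat{F}_{t+1}\leq 0\}$ realizes multiplication by $\beta^n$ through the isomorphism \eqref{eq:isoAction}, which shifts the index by $2n$ since $\ind L^{2\nu-1}_p=2\nu$. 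Care must be taken that the crossings occurring at non-integer time-shifts of $\varphi$ (translated points strictly between the integers) are accounted for in the total change, which is precisely what the $\Phi$-independent shift argument guarantees.
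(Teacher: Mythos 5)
Your proposal is correct in outline and ultimately leans on the same external inputs as the paper, but it is organized differently, and the step you flag as the ``main obstacle'' is exactly where the paper's proof does all of its work. The paper does not telescope through $t=1$: it applies \cite[Proposition~5.2 and Lemma~5.5]{periodicCPd} to produce $\Z/p\Z$-equivariant linear automorphisms $A_0,A_2$ of $\C^N$ with $F_j\circ A_j=G\oplus Q_j$, where the $Q_j$ are invariant non-degenerate quadratic forms on the $(\C^n)^7$ rotation factor satisfying $i(Q_2)-i(Q_0)=4n$; the sets $\{\widehat{Q}_j\leq 0\}$ then deformation retract onto lens spaces $L^{i(Q_j)-1}_p$, and the index difference is read off from \cite[Proposition~3.9~(v) and Proposition~3.14]{GKPS}, which compute $\ind$ of a sublevel set of such a direct sum. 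Your per-period statement ($+2n$ per full Reeb turn) is precisely the generalization the paper mentions in the remark following the lemma, so the telescoping is legitimate, and your heuristic (each complex phase crosses one integer per period, contributing $+2$ to the index of the quadratic factor) is the correct mechanism behind $i(Q_2)-i(Q_0)=4n$; but to make it a proof you must still invoke the equivariant splitting and the GKPS index propositions, at which point the one-shot computation from $0$ to $2$ is no harder than one period. Two smaller points: your first alternative (the inclusion realizing multiplication by $\beta^n$) is closer to what is used later in the proof of the main theorem via the isomorphism (\ref{eq:isoAction}) than to what this lemma's proof needs; and do not omit the paper's remark that, since $0$ is a regular value, the sublevel sets are manifolds with boundary whose singular and \v{C}ech cohomologies agree via collar neighborhoods --- this is needed because $\ind$ is defined here with singular cohomology while the GKPS results are stated for their \v{C}ech-type index.
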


More generally the variation of index between the sublevel at time $t$
and time $t+k$ is $2nk$ for $k\in\N$ as long as everything is defined
and $t$ is not a time-shift.
Here, we are considering a variation between time $2$ and $0$
instead of considering it between time $1$ and $0$ due to a technical issue
in the proof of Theorem~\ref{thm:main}.

\begin{proof}
    Since $\varphi$ does not have translated points with time-shift $0$,
    $\widehat{F}_0$ and $\widehat{F}_2$ are submersion.
    Therefore, $\{ \widehat{F}_0 \leq 0 \}$ and $\{ \widehat{F}_2 \leq 0 \}$
    are submanifolds with boundary.
    Taking collar neighborhoods, we deduce that their singular cohomologies
    are naturally isomorphic to their \v{C}ech cohomologies.
    According to \cite[Proposition~5.2 and
    Lemma~5.5]{periodicCPd},
    one can find
    $\Z/p\Z$-equivariant automorphisms $A_0$ and $A_2$ of $\C^N$
    such that $F_j\circ A_j = G\oplus Q_j$ where $Q_j$ are
    non-degenerated quadratic form of $(\C^n)^7$
    that are invariant under the diagonal action of $\Z/p\Z$
    and satisfy
    \begin{equation*}
        i(Q_2) - i(Q_0) = 4n,
    \end{equation*}
    where $i(Q)\in\N$ denotes the index of the quadratic form $Q$
    (the referred proofs do not discuss possible $\Z/p\Z$-symmetries
    but it directly follows from the explicit formulas of $A_j$ and $Q_j$
    that are given).
    Therefore, the $\{ \widehat{Q}_j \leq 0 \}$'s retract on
    lens spaces $\simeq L^{i(Q_j)-1}_p$ by deformation.
    The conclusion is now a direct consequence of
    \cite[Proposition~3.9 (v) and Proposition~3.14]{GKPS}.
\end{proof}

\subsection{Proof of the main theorem}

\begin{proof}[Proof of Theorem~\ref{thm:main}]
    As was already mentioned in the introduction, by taking a subgroup,
    one can assume that $\varphi\in\contoc^{\Z/p\Z}(\sphere{2n-1})$
    with $p$ prime.
    The case $p=2$ was already treated by Sandon \cite{San13} and can easily be recovered
    with small modifications of the following proof.
    Hence, we assume that $p\geq 3$ and take back notation of
    the latter section.

    Without loss of generality, we assume that $\varphi$ does not
    have any discriminant point (\emph{i.e.} translated point with time-shift
    $0$).
    Let $k\in\N$ be the number of time-shifts of $\varphi$ seen in $\R/\Z$.
    The number of time-shifts in $[0,2]$ is thus $2k$.
    According to the correspondence with critical values of $\action$,
    the number of critical values in $[0,2]$ is also $2k$.
    If $\varphi$ has finitely many translated points, $\action$ has finitely
    many critical points.
    According to Theorem~\ref{thm:LS}, if $\varphi$ has finitely many translated points,
    \begin{equation*}
        2k \geq \cat(\{ \action|_C \leq 2 \}, \{ \action|_C \leq 0\}),
    \end{equation*}
    for all connected component $C$ of $\{ \action\leq 2\}$.

    In order to conclude, let us show that this category is at least $4n-1$
    for a good choice of $C$.
    As a graded $\F_p$-algebra,
    \begin{equation*}
        H^*(L_p^{2N-1};\F_p) \simeq \F_p[\alpha,\beta]/(\alpha)(\beta^N),
    \end{equation*}
    where $\deg \alpha =1$, $\deg \beta = 2$ and $\beta = \bock\alpha$.
    According to Lemma~\ref{lem:variationIndex},
    we have the following variation of index
    \begin{equation*}
        \ind \left\{ \widehat{F}_2 \leq 0 \right\} - 
        \ind \left\{ \widehat{F}_0 \leq 0 \right\} = 4n.
    \end{equation*}
    If $\ind \{ \widehat{F}_0 \leq 0 \}$ is even $=2r$, then the classes
    \begin{equation*}
        \beta^r,\alpha\beta^r,\ldots, \beta^{r+2n-1},\alpha\beta^{r+2n-1}
        \in H^*(L^{2N-1}_p;\F_p)
    \end{equation*}
    are in the kernel of the restriction to $\{ \widehat{F}_0\leq 0\}$ but not in
    the one of the restriction to $\{ \widehat{F}_2 \leq 0\}$.
    Therefore, there is a class $u\in H^{2r}(\{ \widehat{F}_2 \leq 0\},\{
    \widehat{F}_0\leq 0\})$
    (associated with $\beta^r$) such that the class
    \begin{equation*}
        u\smile \alpha \beta^{2n-1} \in H^{2r+4n-1}\left(\left\{ \widehat{F}_2 \leq
        0\right\},\left\{ \widehat{F}_0\leq 0\right\}\right)
    \end{equation*}
    is non-zero.
    Hence,
    \begin{equation*}
        \cwgt(u\smile \alpha \beta^{2n}) \geq \cwgt(\alpha)
        + (2n-1)\cwgt(\beta) \geq 4n - 1,
    \end{equation*}
    according to Theorem~\ref{thm:bockstein} (we recall that $\beta = \bock\alpha$).
    The isomorphism of $H^*(L^{2N-1}_p)$-algebra 
    commuting with the Bockstein morphism $\bock$ (\ref{eq:isoAction}) allows us
    to conclude that
    \begin{equation*}
        \cat(\{ \action|_C \leq 2 \}, \{ \action|_C \leq 0\}) \geq
        1 + \cwgt(u'\smile \alpha \beta^{2n}) \geq 4n,
    \end{equation*}
    where $u'$ is the image of $u$ under the isomorphism (\ref{eq:isoAction})
    and $C$ is a connected component on which $u'\smile \alpha\beta^{2n}$
    is non-zero
    (we recall that the cohomology of a disjoint union of sets is
    the direct sum of the cohomologies of these sets).\\
    If $\ind\{ \widehat{F}_0 \leq 0\}$ is odd $=2r+1$, then the classes
    \begin{equation*}
        \alpha\beta^r,\beta^{r+1},\ldots, \alpha\beta^{r+2n-1},\beta^{r+2n}
        \in H^*(L^{2N-1}_p;\F_p)
    \end{equation*}
    are in the kernel of the restriction to $\{ \widehat{F}_0\leq 0\}$ but not in
    the one of the restriction to $\{ \widehat{F}_2 \leq 0\}$.
    Therefore, there is a class $u'\in H^{2r+1}(\{\action\leq 2\},\{\action\leq 0\})$
    (associated with $\alpha\beta^r$) such that the class
    \begin{equation*}
        u'\smile \beta^{2n-1} \in H^{2r+4n-1}(\{\action\leq 2\},\{\action\leq 0\})
    \end{equation*}
    is non-zero.
    Hence,
    \begin{equation*}
        \cat(\{ \action|_C \leq 2 \}, \{ \action|_C \leq 0\}) \geq
        1 + \cwgt(u'\smile \beta^{2n-1}) \geq 1 + 2(2n-1)
        \geq 4n -1,
    \end{equation*}
    where $C$ is a connected component on which $u'\smile \beta^{2n-1}$
    is non-zero.
\end{proof}

\bibliographystyle{amsplain}
\bibliography{biblio} 
\end{document}